\documentclass[11pt,a4paper]{article}

\usepackage[utf8]{inputenc}
\usepackage[T1]{fontenc}
\usepackage{lmodern}          
\usepackage{microtype}        
\usepackage{geometry}         
\usepackage{xcolor}

\usepackage{graphicx}
\usepackage{amsmath}
\usepackage{amssymb}
\usepackage{amsfonts}
\usepackage{amsthm}
\usepackage{mathrsfs}
\usepackage{enumitem}
\setlist{nosep}

\usepackage{authblk}

\theoremstyle{plain}
\newtheorem{theorem}{Theorem}[section]
\newtheorem{lemma}[theorem]{Lemma}

\newtheorem{proposition}[theorem]{Proposition}

\theoremstyle{definition}
\newtheorem{definition}[theorem]{Definition}
\newtheorem{example}[theorem]{Example}
\newtheorem{settings}[theorem]{Setting}

\newtheorem{remark}[theorem]{Remark}

\theoremstyle{remark}

\usepackage{hyperref}
\usepackage[nameinlink,capitalize]{cleveref}

\newcommand{\Z}{\mathbb{Z}}
\newcommand{\Q}{\mathbb{Q}}
\newcommand{\R}{\mathbb{R}}

\newcommand{\spa}{\mathrm{span}}
\newcommand{\conv}{\mathrm{conv}}

\newcommand{\ve}{\varepsilon}

\newcommand{\co}{\mathcal O}
\newcommand{\mc}{\mathcal}
\newcommand{\keywords}[1]{\noindent\textbf{Keywords:} #1}
\newcommand{\subjclass}[2]{\noindent\textbf{#1 Mathematics Subject Classification:} #2.}

\title{The addition on totally positive integers uniquely determines the totally real number field}
\author{V\' \i t\v ezslav Kala\thanks{vitezslav.kala@matfyz.cuni.cz, ORCID: 0000-0001-5515-6801} }
\author{Ritoprovo Roy\thanks{{ritoprovo.roy@matfyz.cuni.cz, ORCID: 0000-0002-5958-3816}}}
\affil{Charles University, Faculty of Mathematics and Physics, Department of Algebra, Sokolov\-sk\' a 83, 18600 Praha~8, Czech Republic
}
\date{}

\begin{document}
\maketitle

\begin{abstract} Let $K$ be a totally real number field.
 We prove that the totally positive algebraic integers $\co_K^+$ of $K$, viewed as an abstract additive semigroup, uniquely determine $K$. We also describe all the additive relations by giving a presentation of $\co_K^+$ in terms of indecomposables as generators and a certain concrete set of generating relations. 
 In fact, we obtain this presentation in a more general class of discrete semigroups which we call semilattices.

 \medskip
 \keywords{totally real number field, algebraic integers, semigroup presentation}
 
 \medskip
 \subjclass{2020}{11H06, 11R04, 11R80, 20M05, 20M14}

 \medskip
 
 \noindent\textbf{Funding:} This publication was produced with the financial support of the European Union and the Ministry of Education, Youth and Sports, under the Jan Amos Komenský Operational Programme, project Returns at CU (Reg. No. CZ.02.01.01/00/24\_037/0013839), grant RTNS25-044 (supporting R.R.).

 \medskip
 
 \includegraphics[width=0.8\linewidth]{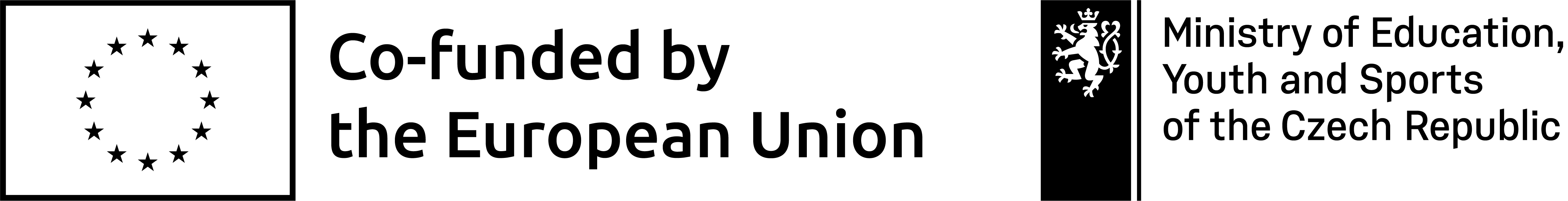}
 
 \medskip
 
 \noindent\textbf{Declarations:} The authors have no competing interests to declare that are relevant to the content of this article. Data sharing not applicable as no datasets were generated or analyzed during the current study.
 
 \medskip
 
 \noindent
 \textbf{Acknowledgments:}  
 The LLM Claude Opus 5 was used to critically review drafts of the manuscript; the suggested corrections were then carried out by the authors. All the text and contents of the article are human-generated, with the exception of Remark \ref{rem:counterex} which describes results discovered by Claude Opus 5.
 
\end{abstract}

\section{Introduction}\label{sec:intro}

Let $K$ be a totally real number field with ring of integers $\co_K$. Let $\co_K^+$ denote the semigroup of totally positive algebraic integers. In analogy with irreducible elements that cannot be factored multiplicatively, we say that a totally positive algebraic integer $\alpha\in\co_K^+$ is \textit{indecomposable} if it cannot be additively decomposed as the sum $\alpha=\beta+\gamma$ of two totally positive algebraic integers $\beta,\gamma\in\co_K^+$. As indecomposables generate  $\co_K^+$, they are the building blocks that are the first key to understanding its structure.

It is curious that, in contrast with the huge interest in irreducible elements and multiplicative factorizations, additively indecomposable elements have not received much attention. One reason might be that in the ring $\Z$ of integers in $K=\Q$, the structure of indecomposables is trivial, with 1 being the only indecomposable.

Dress--Scharlau \cite{DS} proved that in a real quadratic field $\Q(\sqrt D)$, the indecomposables correspond precisely to certain ``semiconvergents'' to the continued fraction of $\sqrt D$ (or $(1+\sqrt D)/2$), see \cite[$\S 16$]{Pe}. Based on their result, Narkiewicz \cite[Problem 53]{Na} formulated an open problem concerning the structure of indecomposables, namely, to determine the maximum norm of an indecomposable in a given field $K$. Brunotte \cite{Br} gave an upper bound in terms of the units of $K$, and recently Kala--Yatsyna \cite{KY3} proved that the discriminant itself works as an upper bound. Based on the evidence from quadratic \cite{DS, TV}, cubic \cite{KT, Ti}, and multiquadratic \cite{KM, Man} number fields, the discriminant bound seems to be close to best possible.

The recent renewed interest of Kala, Man, Tinkov\' a, Yatsyna, and others in indecomposables stems from their connection to universal quadratic forms. This was first applied by Siegel \cite{Si3}, and then much later rediscovered by Blomer--Kala \cite{BK}. Generalizing the sum of four squares $X^2+Y^2+Z^2+W^2$ that represents all positive integers $\Z^+$, a totally positive definite quadratic form over $\co_K$ is \textit{universal} if it represents all of $\co_K^+$. It turned out to be possible to control the properties of universal quadratic forms, such as their minimal ranks \cite{BK, KT, KYZ, Ya} and characterizations through criterion sets \cite{BH, CO,KKR}, in terms of the additive structure of $\co_K^+$ and its indecomposables. For more information, see the survey \cite{Ka}.

Despite all the aforementioned recent works, we still lack a good understanding of the indecomposables and the additive semigroup $\co_K^+$, especially when the degree $[K:\Q]$ is large.
Our starting point is the real quadratic case $K=\Q(\sqrt D)$. Besides the description of indecomposables due to Dress--Scharlau \cite{DS}, Hejda--Kala \cite{HK} described all the additive relations that hold between indecomposables, and used them to further show that $\co_{\Q(\sqrt D)}^+$, viewed as an abstract semigroup, uniquely determines the real quadratic field $\Q(\sqrt D)$.

This result is rather surprising! First of all, the additive group $\co_K$ determines only the degree $n=[K:\Q]$, as it is isomorphic to $\Z^n$ thanks to the existence of integral basis. Second, there has been a lot of interest  in determining which invariants uniquely determine the number field: For example, the absolute Galois group does \cite{Neu,Uch}, but the Dedekind zeta-function \cite{CdSL+,MS,Pr,Su} or the ring of adeles \cite{Ko} do not.

In their work, Hejda--Kala \cite{HK} heavily relied on the description of indecomposables in terms of the continued fraction \cite{DS}. They posed the open question about the higher degree generalization, but wrote that ``\textit{this may be quite hard, as we lack a good understanding of indecomposable elements}''.
Nevertheless, as our first main result, we answer their question in the affirmative.

\begin{theorem}\label{th:isom}
	Let $K, L$ be totally real number fields. Then
	\[K\simeq L \text{ as fields } \Longleftrightarrow \ \co_K^+\simeq \co_L^+ \text{ as additive semigroups.}
	\]
\end{theorem}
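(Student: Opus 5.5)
The strategy is to linearize the semigroup isomorphism and show that it must be ``diagonal up to a permutation'' in the real embeddings; the field isomorphism can then be read off directly. The direction $\Rightarrow$ is trivial, since a field isomorphism $K\to L$ maps $\co_K$ onto $\co_L$ and preserves total positivity. For $\Leftarrow$, let $n=[K:\Q]$ and let $\varphi:\co_K^+\to\co_L^+$ be an additive isomorphism.

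\textbf{Step 1: extend $\varphi$ to a linear map.} Both semigroups are cancellative. The group generated by $\co_K^+$ is all of $\co_K$: for $x\in\co_K$ and $m\in\Z$ large enough, $x+m$ is totally positive, so $x=(x+m)-m$. Hence $\varphi$ extends uniquely to a group isomorphism $\co_K\to\co_L$, in particular $\co_L\simeq\Z^n$ and $[L:\Q]=n$. Tensoring with $\Q$ gives a $\Q$-linear isomorphism $\varphi:K\to L$, and tensoring with $\R$ gives an $\R$-linear bijection
\[
\varphi_\R: K\otimes\R\simeq\R^n\to L\otimes\R\simeq\R^n,
\]
where the identifications are via the real embeddings $\sigma_1,\dots,\sigma_n$ of $K$ and $\tau_1,\dots,\tau_n$ of $L$.

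\textbf{Step 2 (the main point): $\varphi_\R$ preserves the closed orthant.} I claim $\varphi_\R$ maps the closed positive orthant $\R_{\ge0}^n$ onto itself.
\begin{itemize}
\item The set $\Q_{>0}\cdot\co_K^+$ equals the totally positive elements of $K$. This set is dense in the open orthant, so the closure of the cone generated by $\co_K^+$ is exactly $\R_{\ge0}^n$, and likewise for $L$.
\item Since $\varphi_\R$ is a linear homeomorphism mapping $\co_K^+$ onto $\co_L^+$, it maps the closed cone generated by $\co_K^+$ onto the one generated by $\co_L^+$.
\item A linear bijection between closed orthants maps extreme rays to extreme rays. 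Hence there are a permutation $\pi$ and constants $c_i>0$ with
\[
\tau_i(\varphi(x))=c_i\,\sigma_{\pi(i)}(x)\quad\text{for all } x\in K.
\]
\end{itemize}
This step is where the real content lies. The only care needed is the density argument identifying the closed cone with the orthant and the standard fact about extreme rays.

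\textbf{Step 3: recover a field isomorphism.} Put $\beta=\varphi(1)\in\co_L^+$. Then $\tau_i(\beta)=c_i$ for each $i$. Define $\psi:K\to L$ by $\psi(x)=\varphi(x)/\beta$. This is $\Q$-linear and bijective, and $\tau_i(\psi(x))=\sigma_{\pi(i)}(x)$ for every $i$. Consequently
\[
\tau_i(\psi(xy))=\sigma_{\pi(i)}(x)\,\sigma_{\pi(i)}(y)=\tau_i(\psi(x)\psi(y)),
\]
and since the real embeddings of $L$ are jointly injective, $\psi(xy)=\psi(x)\psi(y)$. Also $\psi(1)=1$. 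Thus $\psi$ is a ring isomorphism, so $K\simeq L$.

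Overall, I expect only Step~2 to require any argument; Steps~1 and~3 are formal.
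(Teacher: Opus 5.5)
Your proof is correct, but it is organized differently from the paper's. The paper never works with a given isomorphism. Instead it reconstructs $K$ from the abstract semigroup alone: it determines $n$ as the minimal size of a ``good'' tuple and embeds $\mathcal S$ into $\R^n$ via the relations $(\rho_\alpha)$ relative to a chosen basis $\nu_1,\dots,\nu_n\in\mathcal S$. It then locates the $n$ extreme rays of the closed cone and solves a homogeneous linear system whose solution is $\sigma_1(\omega_i)/\sigma_1(\omega_1)$; these ratios generate a field isomorphic to $K$.

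You share the key geometric input: the closed cone is the orthant, its extreme rays correspond to the real embeddings, and a linear extension of the semigroup isomorphism must respect them. Your linearization through the group completion $\co_K$ is essentially the paper's embedding from step (2) in different clothing. The real difference is the last step. Instead of solving for coordinate ratios, you observe that $\varphi_\R$ is a monomial matrix with positive entries and normalize by $\beta=\varphi(1)$. This makes $\psi$ compatible with every embedding and hence multiplicative.

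Your route buys brevity and a stronger conclusion. Every additive isomorphism $\co_K^+\to\co_L^+$ has the form $x\mapsto\beta\psi(x)$ with $\psi$ a field isomorphism. Since $\psi(\co_K^+)=\co_L^+$ forces $\beta\co_L^+=\co_L^+$, the factor $\beta$ is a totally positive unit. So, for instance, the automorphisms of $\co_K^+$ are exactly $x\mapsto\ve\psi(x)$ with $\ve\in\co_K^{\times,+}$ and $\psi\in\mathrm{Aut}(K)$. Your route also sidesteps a check the paper leaves implicit: that its choice-dependent reconstruction transports along isomorphisms.

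The paper's route, in turn, gives an explicit recipe producing $K$ from the bare semigroup. It is phrased in the semilattice language and via the relations $(\rho_\alpha)$, which are reused for the presentations in Sections~\ref{sec:6}--\ref{sec:7}.

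The only detail you should spell out is the density claim in Step~2. The set $\sigma(K)=\bigcup_{d\ge1}\frac1d\sigma(\co_K)$ is dense in $\R^n$ because $\sigma(\co_K)$ is a full lattice, and that is what makes the totally positive elements dense in the open orthant.
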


The main tool behind the proof of the theorem is our description of additive relations between totally positive elements in a general totally real number field.
In fact, our arguments concerning such relations work more generally than just in the additive semigroup $\co_K^+$. We thus introduce the notion of a \textit{semilattice} $\mc S$ as a certain discrete subsemigroup of $\R^n$ (see Definition \ref{de:semilattice}). A similar generalization was very recently considered by Fukshansky--Kostopoulou \cite{FK} who connected the properties of indecomposables to the geometry of the corresponding lattice, such as its successive minima, and extended some of the results of \cite{HK} to other semilattices in $\R^2$. Pěchoučková--Stern--Zindulka \cite{PSZ} recently considered partitions in another general class of semigroups.
Also note that when $n=1$, semilattices in $\R^1$ are precisely numerical semigroups whose study constitutes a highly active research area \cite{Cu, De, RB, RG, Wi}.

After covering some preliminaries in Section \ref{sec:2}, we further discuss that a general semilattice need not contain any indecomposables. Regardless, we can construct certain relations \eqref{eq:Ra} in the semilattice (in Section \ref{sec:3}). After summarizing some preliminaries on totally real number fields in Section \ref{sec:4}, we use our relations  in Section \ref{sec:5} to prove Theorem \ref{th:isom}. The main idea is that, instead of the explicit work involving continued fractions done by \cite{HK}, we reconstruct the number field through the convex geometry of the cone of totally positive elements.

The goal of Sections \ref{sec:6} and \ref{sec:7} is to generalize \cite[Corollary 3]{HK} by proving that our relations \eqref{eq:Ra} generate all the relations in the semilattice $\mc S$, provided that the indecomposables additively generate all of $\mc S$. In order to express this result formally, we adopt the language of universal algebra and specifically the notion of a presentation \cite[Chapter I, Section 6]{Gr}. This is introduced at the beginning of Section \ref{sec:6} where we then prove our second main result, Theorem \ref{th:pres1}, which says that indeed, the indecomposables together with the relations \eqref{eq:Ra} give a certain presentation of $\mc S$.

However, a typical semilattice (such as the totally positive integers $\co_K^+$ when $K\neq\Q$) contains infinitely many indecomposables, in which case Theorem \ref{th:pres1} also involves infinitely many relations, one for each indecomposable, which is quite unsatisfactory. Fortunately, in $\co_K^+$, there are only finitely many classes of indecomposables up to the multiplication by totally positive units (thanks to the aforementioned fact that all indecomposables have bounded norms). We conclude the paper by Theorem \ref{th:pres2} in which we construct a corresponding set of relations that is finite up to the multiplication by totally positive units and that still generates all the relations.
This result should be contrasted with the work \cite{AF+} that concerned a certain combinatorial variant of this problem, in which the authors showed that it is sometimes \textit{not} possible to generate all the relations. Our Theorem \ref{th:pres2} indeed gives a ``local'' set of generating relations, answering a question that was informally posed in \cite[p. 571]{AF+}.

\section{Semilattices} \label{sec:2}

A \textit{commutative semigroup} is a set $S$ equipped with a binary operation + that is commutative and associative. Moreover, such $S$ is 
\begin{itemize}
	\item \textit{cancellative} if for all $x,y,z\in S$ we have $x+z=y+z\implies x=y$,
	\item \textit{torsion-free} if for all $x,y\in S$, $k\in\Z^+$ we have $kx=ky\implies x=y$.
\end{itemize}

We will be interested in a special class of cancellative, torsion-free, commutative semigroups that we will call semilattices. However, in order to introduce them, we need to recall some terminology first.

\medskip

Let $V$ be an $\R$-vector space of dimension $n\in\Z^+$. We consider $V$ to be equipped with the usual topology on $V\simeq \R^n$.

We denote by
\begin{itemize}
	\item $\spa_\R(X)$ the \textit{$\R$-linear span} of $X$, i.e., the smallest $\R$-vector subspace of $V$ that contains~$X$,
	\item $\spa_\Z(X)$ the \textit{$\Z$-linear span} of $X$, i.e., the smallest $\Z$-submodule of $V$ that contains $X$,
	\item $\conv(X)$ the \textit{convex hull} of $X$, i.e., the smallest convex subset of $V$ that contains $X$,
	\item $\overline X$ the \textit{closure} of $X$, i.e., the smallest closed subset of $V$ that contains $X$.
\end{itemize}

\medskip

A discrete subgroup $\Lambda\subset V$ is a \textit{lattice} if $\spa_{\R}(\Lambda)=V$.
(Note that we are assuming that all our lattices have full rank, which one can of course achieve without loss of generality by replacing $V$ by $\spa_{\R}(\Lambda)$.)

\begin{definition} \label{de:semilattice}
	Let $V$ be an $\R$-vector space of dimension $n$. A subset $\mc S\subset V$ is a \textit{semilattice} if 
	$\mc S$ is a semigroup, $0\not\in \mc S$, and  $\spa_\Z(\mc S)$ is a lattice.
\end{definition}

Clearly, a semilattice $\mc S$ is a cancellative, torsion-free, commutative semigroup that is discrete as a subset of $V$. We also have $V=\spa_\R(\mc S)$.

\begin{example}
	Let $V=\R$. A subsemigroup $\mc S$ of $\Z^+$ is called a \textit{numerical semigroup} if the complement $\Z^+\setminus\mc S$ is finite (note that sometimes it is assumed that $0$ must lie in a numerical semigroup, which we are not allowing here; this is without loss of generality, for one can always just consider $\mc S\cup\{0\}$). Every numerical semigroup is a semilattice according to our definition. 
	
	Conversely, if $\mc S\subset \R$ is a semilattice, then there is a group isomorphism $\iota:\spa_\Z(\mc S)\simeq \Z$ (because $\Z$ is the only lattice in $\R$, up to isomorphism), and we can choose this isomorphism so that $\iota(\mc S)\subset \Z^+$ (since $0$ does not lie in a semilattice, $\iota(\mc S)$ can not contain both positive and negative elements). Then $\iota(\mc S)\subset \Z^+$ is a numerical semigroup.
	
	In other words, semilattices in $\R^1$ are exactly numerical semigroups, up to semigroup isomorphism.	
\end{example}

\begin{remark}
	Let us remark that in semigroup theory, a \textit{semilattice} usually means something different from our Definition \ref{de:semilattice}, namely, an idempotent (i.e., $x+x=x$ for all $x$) commutative semigroup. This definition is motivated by the notion of \textit{lattice} in universal algebra and order theory (i.e., a partially ordered set equipped with the meet $\wedge$ and join $\vee$ operations). 
	
	Thus, \textit{lattice} unfortunately means two very different things (in the number-theoretic and universal-algebraic contexts), and we similarly introduce the term \textit{semilattice} in the number-theoretic setting.	
\end{remark}

In order to define another important class of semilattices, let us introduce some further terminology.
We say that a subset $X\subset V$ is
\begin{itemize}
	\item \textit{convex} if $\forall x,y\in X, \forall s,t\in \R_{\geq 0}$ such that $s+t=1$ we have $sx + ty \in X$,
	\item \textit{closed under addition} if $\forall x,y\in X$ we have $x + y \in X$,
	\item \textit{closed under rays} if $\forall x\in X, \forall t>0$ we have $tx \in X$.
	\item \textit{closed under outward rays} if $\forall x\in X, \forall t\geq 1$ we have $tx \in X$.
\end{itemize}

A subset $\mc{C}\subset V$ is a \textit{cone} if it is convex, closed under addition and outward rays, and $0\not\in \mc C$.

\medskip
    
Note that our definition is quite general, as we are not requiring a cone to be closed and we are not requiring it to be closed under rays. Also, the condition that $0\not\in \mc C$ is not always assumed when defining a cone, but for us it will be important in order to elegantly exclude some trivial cases.

\begin{example} \label{set:1}
     Let $V$ be a $\mathbb{R}$-vector space of dimension $n$, $\Lambda$ a lattice in $V$, and $\mathcal{C}$ a cone in $V$. Then $\mathcal{S} := \mathcal{C} \cap \Lambda$ is a semilattice in $\spa_\R(\mc S)$ (provided that $\mc S\neq\emptyset$). 
 
	Note that in this case, we can assume that $V=\spa_\R(\mc S)$ without loss of generality, for if it were not satisfied, we could replace $V$ by $\spa_\R(\mc S)$, $\Lambda$ by $\Lambda\cap\spa_\R(\mc S)$, and $\mc C$ by $\mc C\cap\spa_\R(\mc S)$.
\end{example}

\begin{definition} Let $\mc S\subset V$ be a semilattice.
    An element $\alpha \in \mathcal{S}$ is  \textit{indecomposable} if there do not exist $\beta,\gamma \in \mathcal{S}$ such that $\alpha = \beta + \gamma$. Let us denote by $\mc I$ the set of all indecomposables in $\mc S$.
\end{definition}

\begin{proposition} \label{prop:sum ind}  Let $\mc S\subset V$ be a semilattice.
	If $0 \notin \overline{\conv(\mc{S})}$, then every element of $\mathcal{S}$ is a finite sum of indecomposables.
\end{proposition}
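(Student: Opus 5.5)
The plan is to find a linear functional that is bounded below by a positive constant on $\mc S$, and then run a descending induction on its value. Since $\overline{\conv(\mc S)}$ is a closed convex set not containing $0$, the separating hyperplane theorem gives a linear functional $\ell:V\to\R$ and a constant $c>0$ with $\ell(x)\ge c$ for all $x\in\overline{\conv(\mc S)}$, in particular for all $x\in\mc S$. (Strict separation of the point $0$ from a closed convex set is standard: take $\ell(x)=\langle p,x\rangle$ where $p$ is the nearest point of $\overline{\conv(\mc S)}$ to $0$, and $c=\|p\|^2>0$.)

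Next I will show that for each real $M$ the set $\{x\in\mc S:\ell(x)\le M\}$ is finite. This does \emph{not} follow from discreteness of $\Lambda=\spa_\Z(\mc S)$ alone, since the half-space $\{\ell\le M\}$ is unbounded; this is the step I expect to be the main obstacle. I would avoid it altogether by inducting on word length rather than on $\ell$.

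For $\alpha\in\mc S$, any expression $\alpha=\beta_1+\dots+\beta_k$ with $\beta_i\in\mc S$ satisfies
\[
\ell(\alpha)=\sum_i \ell(\beta_i)\ge kc,
\]
so $k\le \ell(\alpha)/c$. Hence the length of decompositions of $\alpha$ into elements of $\mc S$ is bounded. Choose a decomposition $\alpha=\beta_1+\dots+\beta_k$ of maximal length $k$.

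Each $\beta_i$ is then indecomposable. Otherwise, writing $\beta_i=\gamma+\delta$ with $\gamma,\delta\in\mc S$ would produce a decomposition of $\alpha$ of length $k+1$, contradicting maximality.

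Thus every element of $\mc S$ is a finite sum of indecomposables. The argument uses only that $\mc S$ is a semigroup and the separation step, so no finiteness statement is needed after all.
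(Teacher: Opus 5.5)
Your proof is correct and is essentially the paper's argument: the separating functional bounds the length of every decomposition of $\alpha$ by $\ell(\alpha)/c$, and where the paper then inducts on the maximal decomposition length, you take a decomposition of maximal length and observe that each summand must be indecomposable, which is the same idea stated more directly. You were right to abandon the finiteness claim for $\{x\in\mc S:\ell(x)\le M\}$, since it can fail (e.g.\ for $\mc S=\{(x,y)\in\Z^2: x\geq 1\}$ with $\ell(x,y)=x$).
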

\begin{proof}
	We will use the hyperplane separation theorem (see e.g. \cite[Corollary 11.4.2]{Ro}) for the 
	sets $\{0\}$ and $ \overline{\conv(\mathcal{S})}$. They satisfy its assumptions as both sets are non-empty, convex, and closed, they are disjoint, and $\{0\}$ is compact.
	The theorem then says that there exists a linear function $f: V \to \R$ and a positive real number $\delta$ such that $f(x) \geq \delta$ for all $x \in  \overline{\conv(\mathcal{S})}$, and $f(0)=0$.

	If now $\alpha \in \mathcal{S}$ decomposes as $\alpha = \beta_1 +  \dots + \beta_k$ for $\beta_i\in \mc S$, then 
	by linearity of $f$, we have that 
	$f(\alpha) = \sum_{i = 1}^{k}f(\beta_i) \geq k\delta$, and so the length $k$ of the decomposition is bounded (depending on $\alpha$). Thus we can consider the maximum $\ell(\alpha)$ of lengths of all possible decompositions of $\alpha$. Each element has the decomposition $\alpha=\alpha$, and so $\ell(\alpha)\in\Z^+$.

    We proceed by induction on $\ell(\alpha)$.
If $\ell(\alpha)=1$, then $\alpha$ is indecomposable.

Assume now that $\ell(\alpha)>1$ and that we know that every element of length $<\ell(\alpha)$ is a sum of indecomposables. 
As $\ell(\alpha)>1$, we have that $\alpha=\beta+\gamma$ is decomposable. Then $\ell(\beta),\ell(\gamma)<\ell(\alpha)$, and so $\beta,\gamma$ are sums of indecomposables by the induction hypothesis. Thus also $\alpha$ is a sum of indecomposables.
\end{proof}

\begin{example}
	In a general semilattice, not all elements need to be sums of indecomposables.
    
    As one example, we can take the lattice $\Lambda = \mathbb{Z}^2 \subset \mathbb{R}^2$ and define the cone $\mathcal{C} = \{(x,y) \in \mathbb{R}^2 \mid x > \sqrt{2}y \}$. Then $\mathcal{S} = \mathcal{C} \cap \mathbb{Z}^2 = \{(x,y) \in \mathbb{Z}^2 \mid x > \sqrt{2}y \}$ contains no indecomposables.

    As another example take the lattice $\Lambda = \mathbb{Z}^2 \subset \mathbb{R}^2$ and the cone $\mathcal{C} = \{(x,y) \mid (x = 0 \text{ and }  y > 0) \text{ or } (x>0)\}$. Then the only indecomposable in the semilattice $\mathcal{S} = \mathcal{C} \cap \mathbb{Z}^2$ is $(0,1)$; each of the elements $(0,y)$ for $y\in\Z^+$ is a sum of indecomposables, but no element $(x,y)$ with $x\in\Z^+$ is.
    
    However, our principal example of the semilattice $\co_K^+$ does satisfy the assumption of Proposition \ref{prop:sum ind}, see Section \ref{sec:4}.
   \end{example}
    
\begin{remark}\label{rem:counterex}
	While the criterion of Proposition \ref{prop:sum ind} is convenient, the converse implication does not hold.
	As an example (found by Claude Opus 5), one can take the semilattice $$\mc S:=\{(x,0)\in\Z^2\mid x\geq 1\}\cup\{(x,y)\in\Z^2\mid y\geq 1,x\geq -y^2\}\subset \R^2.$$
	Then it is not hard to check that $\mc I=\{(1,0)\}\cup\{(-y^2,y)\mid y\geq 1\}$ generates $\mc S$, but $0\in\overline{\conv(\mc S)}=\{(x,y)\in\Z^2\mid y\geq 0\}$.
	
	However, this semilattice is not of the form $\mc S=\mc C\cap \Lambda$ as in Example \ref{set:1}. 	
	For such semilattices $\mc S=\mc C\cap \Lambda$,
	Claude Opus 5 claims to have found a (rather non-trivial) proof of the converse implication {when the dimension is $n\leq 3$}, and counterexamples in all dimensions $\geq 4$. The write-up is available at \url{https://karlin.mff.cuni.cz/~kala/files/KR-appendix.pdf}.
\end{remark}

\section{Relations in a semilattice} \label{sec:3}

Let $\mc S\subset V$ be a semilattice. Since $V=\spa_\R(\mc S)$, we can fix an $\R$-basis of $V$ consisting of elements of $\mc S$, namely, let $\nu_1,\dots,\nu_n\in\mc S$ be an $\R$-basis of $V$.
Let us establish a key relation that expresses a multiple of each element in terms of this basis. As we are interested in semigroup relations, we will want to have all coefficients to be nonnegative integers.

\begin{lemma} \label{lem:rel1} 
	Let $\mc S\subset V$ be a semilattice and let $\nu_1,\dots,\nu_n\in\mc S$ be an $\R$-basis of $V$. 
	
	For each $\alpha \in \mathcal{S}$, there exist $m \in \Z^+$, $p_i,q_i\in\Z_{\geq 0}$  such that 
	\begin{equation*}
		m\alpha + \sum_{i = 1}^{n} q_i\nu_i = \sum_{i = 1}^{n} p_i\nu_i
		\tag{$\mathcal{\rho}_{\alpha}$},\label{eq:Ra}
	\end{equation*}
	\begin{itemize}
		\item for each $i=1,\dots,n$, $p_i$ and $q_i$ are not both nonzero (i.e., $p_iq_i=0$), and 
		\item $m,p_1,\dots,p_n,q_1,\dots,q_n$ are coprime (i.e., if $k\in\Z^+$ divides all of them, then $k=1$).
	\end{itemize}
 \end{lemma}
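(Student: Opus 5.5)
Our plan is to express $\alpha$ in the basis $\nu_1,\dots,\nu_n$, show the coefficients are rational, clear denominators, and then separate positive and negative parts. Write $\alpha=\sum_{i=1}^n c_i\nu_i$ with $c_i\in\R$, which is possible since the $\nu_i$ form an $\R$-basis of $V$.

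The key step is that all $c_i\in\Q$. Since $\Lambda:=\spa_\Z(\mc S)$ is a lattice in $V$, it is a free $\Z$-module of rank $n$, say with $\Z$-basis $e_1,\dots,e_n$, which is also an $\R$-basis of $V$. The elements $\alpha,\nu_1,\dots,\nu_n$ all lie in $\Lambda$. Write each $\nu_j=\sum_k a_{jk}e_k$ and $\alpha=\sum_k b_k e_k$ with $a_{jk},b_k\in\Z$. The matrix $A=(a_{jk})$ is invertible over $\R$ because the $\nu_j$ are linearly independent, and it has integer entries, so $A^{-1}$ has rational entries (Cramer's rule). The vector $(c_i)$ solves the linear system $c^{T}A=b^{T}$ with integer data, so $c=(A^{T})^{-1}b\in\Q^n$. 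This is the only step with any content, since it is where discreteness enters, via the lattice having a $\Z$-basis. Without it the coefficients could be irrational, as in the $\sqrt2$ cone example. Even so, it is routine.

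Next we clear denominators. Let $d\in\Z^+$ be a common denominator of $c_1,\dots,c_n$, so that $dc_i\in\Z$. Put $p_i=\max(dc_i,0)$ and $q_i=\max(-dc_i,0)$. Then $p_i,q_i\in\Z_{\ge0}$, $p_iq_i=0$, and $d\alpha+\sum q_i\nu_i=\sum p_i\nu_i$.

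Finally we ensure coprimality. Let $g=\gcd(d,p_1,\dots,p_n,q_1,\dots,q_n)\ge1$, and replace $m=d/g$, $p_i/g$, $q_i/g$. The relation still holds in $V$ after dividing by $g$. The conditions $m\in\Z^+$, nonnegativity and $p_iq_i=0$ are preserved, and now the gcd is $1$.

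The equality is an identity of vectors in $V$. Since $\mc S\subset V$, this is exactly the semigroup relation \eqref{eq:Ra}. When all $q_i=0$ (or all $p_i=0$), the corresponding sum is simply empty; at least one $p_i$ is nonzero because $m\alpha\neq 0$.
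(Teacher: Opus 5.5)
Your proof is correct and is essentially the paper's argument. The paper gets $k\alpha\in\spa_\Z(\nu_1,\dots,\nu_n)$ from the finiteness of the index $k=(\Lambda:\Lambda')$, whereas you get rational coordinates from Cramer's rule on a $\Z$-basis of $\Lambda$; these are equivalent, since $|\det A|$ is exactly that index, and your sign-splitting and division by the gcd match the paper (only the order differs).

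One small correction to your closing side remark, which the lemma as stated does not need: $m\alpha\neq 0$ alone does not force some $p_i>0$, since it does not rule out $m\alpha=-\sum_i q_i\nu_i$. The correct reason is that $m\alpha+\sum_i q_i\nu_i\in\mc S$ and $0\notin\mc S$.
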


\begin{proof}
Let $\Lambda':=\spa_\Z(\nu_1,\nu_2,\dots,\nu_n)\subset \Lambda:=\spa_\Z(\mc S)$. As both lattices $\Lambda'\subset \Lambda$ have rank $n$, the index $k=(\Lambda:\Lambda')$ is finite.
  
For $\alpha \in \mathcal{S}\subset \Lambda$  we have $k\alpha\in\Lambda'$, and so there are $r_i\in\Z$ such that $k\alpha = \sum_{i = 1}^{n} r_i\nu_i$. Let us divide $k,r_1,\dots, r_n$ by their greatest common divisor to obtain 
$m\alpha = \sum_{i = 1}^{n} s_i\nu_i$ where $m\in\Z^+,s_1,\dots,s_n\in\Z$ are coprime.

Now if $s_i\geq 0$, then set $p_i=s_i$ and $q_i=0$; if $s_i<0$, then set $p_i=0$ and $q_i=-s_i$. Then all the conditions from Lemma \ref{lem:rel1} are satisfied.
  \end{proof}

The coefficients $m \in \Z^+$, $p_i,q_i\in\Z_{\geq 0}$ of course depend on $\alpha$. However, note that in the proof we have also obtained that $m$ is uniformly bounded, i.e., $m$ divides the index $(\Lambda:\Lambda')$  that is independent of $\alpha$.

\begin{lemma}\label{lem:rel2}
	Let $\mc S\subset V$ be a semilattice and let $\nu_1,\dots,\nu_n\in\mc S$ be an $\R$-basis of $V$. Let $\alpha \in \mathcal{S}$ and let $m,p_i,q_i$ be some integers as in Lemma $\ref{lem:rel1}$.
	
	Assume that some $m' \in \Z^+$, $p_i',q_i'\in\Z_{\geq 0}$  satisfy  
	\begin{equation*}
		m'\alpha + \sum_{i = 1}^{n} q_i'\nu_i = \sum_{i = 1}^{n} p_i'\nu_i.
	\end{equation*}
	Then there exist $\ell\in\Z^+,r_i \in\Z_{\geq 0} $ such that $m' = \ell m$ and $p_i' = \ell p_i + r_i$ and $q_i' = \ell q_i + r_i$.
	
	In particular, the numbers $m,p_i,q_i$ satisfying  the conditions of Lemma $\ref{lem:rel1}$ are unique.
\end{lemma}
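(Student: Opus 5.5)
Since $\nu_1,\dots,\nu_n$ form an $\R$-basis of $V$, each relation between $\alpha$ and the $\nu_i$ can be read off as a statement about the unique real coordinates of $\alpha$. Rewrite both relations with signed coefficients $s_i:=p_i-q_i$ and $s_i':=p_i'-q_i'$ in $\Z$:
\[
m\alpha=\sum_{i=1}^n s_i\nu_i,\qquad m'\alpha=\sum_{i=1}^n s_i'\nu_i .
\]
Because $p_iq_i=0$, we recover $p_i=\max(s_i,0)$ and $q_i=\max(-s_i,0)$. Multiply the first relation by $m'$ and the second by $m$, subtract, and use linear independence of the $\nu_i$. This gives $m's_i=ms_i'$ for all $i$, so the vector $(m',s_1',\dots,s_n')$ is a positive rational multiple of $(m,s_1,\dots,s_n)$.

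Next we show this multiple is a positive integer $\ell$. Write $m'/m=a/b$ in lowest terms with $a,b\in\Z^+$. From $bm'=am$ and $bs_i'=as_i$, the number $b$ divides $am$ and each $as_i$, hence divides $m$ and every $s_i$. Since $\gcd(m,p_1,\dots,p_n,q_1,\dots,q_n)=1$, we have $\gcd(m,s_1,\dots,s_n)=1$ (each $|s_i|$ is one of $p_i,q_i$, the other being $0$). Therefore $b=1$, and $\ell:=a\in\Z^+$ satisfies $m'=\ell m$ and $s_i'=\ell s_i$.

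Now define $r_i:=p_i'-\ell p_i$. Then
\[
q_i'-\ell q_i=(p_i'-s_i')-\ell(p_i-s_i)=p_i'-\ell p_i=r_i,
\]
so it only remains to check $r_i\ge 0$. If $s_i\ge 0$, then $q_i=0$ and $r_i=q_i'\ge 0$. If $s_i<0$, then $p_i=0$ and $r_i=p_i'\ge 0$.

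For uniqueness, suppose $(m',p',q')$ also satisfies the conditions of Lemma~\ref{lem:rel1}. Applying the result in both directions gives $m'=\ell m$ and $m=\ell' m'$, so $\ell=\ell'=1$. Then $p_i'=p_i+r_i$ and $q_i'=q_i+r_i$, and since $p_i'q_i'=0$ and $p_iq_i=0$, one of $p_i+r_i$, $q_i+r_i$ vanishes, which forces $r_i=0$. Hence $p_i'=p_i$ and $q_i'=q_i$.

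The only delicate step is the divisibility argument showing that the proportionality factor is an integer. It needs the coprimality of $m,p_i,q_i$ from Lemma~\ref{lem:rel1}, transferred to coprimality of $m,s_1,\dots,s_n$; everything else is linear independence and a case split on signs.
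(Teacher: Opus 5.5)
Your proof is correct and follows essentially the same route as the paper: compare coordinates in the basis $\nu_1,\dots,\nu_n$, use the coprimality of $m,p_1-q_1,\dots,p_n-q_n$ to get $m\mid m'$, set $r_i=p_i'-\ell p_i=q_i'-\ell q_i$, and check $r_i\ge 0$ by the same sign case split. The only cosmetic difference is in the uniqueness part: you obtain $\ell=1$ by applying the result in both directions and then deduce $r_i=0$, whereas the paper first gets $r_i=0$ from $p_i'q_i'=0$ and then uses the coprimality of $m',p_i',q_i'$ to force $\ell=1$.
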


\begin{proof}
    We have 
    \[
    \alpha=\sum_{i = 1}^{n} \frac{p_i-q_i}m\nu_i=\sum_{i = 1}^{n} \frac{p_i'-q_i'}{m'}\nu_i.
    \]
    As the representation in the basis $\nu_1,\dots,\nu_n$ of $V$ is unique, this implies     
    \[\frac{p_i-q_i}m=\frac{p_i'-q_i'}{m'} \text{ for all } i.
    \]
    By Lemma \ref{lem:rel1}, the integers $m,p_1-q_1,\dots,p_n-q_n$ are coprime. Thus $m$ is the least common multiple of the denominators of the fractions $\frac{p_i-q_i}m=\frac{p_i'-q_i'}{m'}$, and so  $m\mid m'$; let $\ell =\frac {m'}m\in\Z^+$.

    Substituting this in the equation above we get that 
    \[\frac{p_i-q_i}m=\frac{p_i'-q_i'}{\ell m},
    \]
    which implies $p_i'-\ell p_i=q_i'-\ell q_i$. Let $r_i=p_i'-\ell p_i=q_i'-\ell q_i\in\Z$, so that we have $p_i' = \ell p_i + r_i$ and $q_i' = \ell q_i + r_i$.
    
    To show that $r_i$ is nonnegative, we recall that $p_i,q_i$ cannot be both nonzero. If $p_i = 0$, then $r_i=p_i'\geq 0$; and similarly if $q_i = 0$, then $r_i = q_i' \geq 0$.
    
    ``In particular'' part: If moreover $p_i',q_i'$ are not both nonzero, then $r_i=0$ for all $i$. By the coprimality assumption we then get $\ell =1$.
\end{proof}

Thus, in the following we will often work with the \textit{unique} equation \eqref{eq:Ra} that we have defined for every $\alpha\in\mc S$.

\section{Number field semilattices}\label{sec:4}

Let $K$ be a totally real number field of degree $n=[K:\Q]$, i.e., there are $n$ real embeddings $\sigma_i:K\hookrightarrow\R$. The \textit{norm} and \textit{trace} of $\alpha\in K$ are $N(\alpha)=\sigma_1(\alpha)\cdots\sigma_n(\alpha)$ and $\text{Tr}(\alpha)=\sigma_1(\alpha)+\dots+\sigma_n(\alpha)$. 
An element $\alpha \in K$ is \textit{totally positive} if $\sigma_i(\alpha) > 0$ for all $1 \leq i \leq n$.

Let $\co_K$ be the ring of integers of $K$ and $\co_K^+$ the semigroup of totally positive integers. 
An element $\alpha\in\co_K^+$ is \textit{indecomposable} if $\alpha\neq\beta+\gamma$ for all $\beta,\gamma\in\co_K^+$. We denote the set of all indecomposables in $\co_K^+$ by $\mathcal J$.

The group of units of $K$ is $\co_K^{\times}$ and the subgroup of totally positive units is denoted $\co_K^{\times, +}$. By Dirichlet's unit theorem,  $\co_K^{\times}$ and $\co_K^{\times, +}$ are finitely generated abelian groups of rank $n-1$, with torsion subgroups $\{\pm 1\}$ and $\{1\}$, respectively. In particular, there is a fundamental system of totally positive units $\ve_1,\dots,\ve_{n-1}\in\co_K^{\times, +}$ that are free generators of $\co_K^{\times, +}$.

The group of totally positive units  $\co_K^{\times, +}$ acts on $\co_K^+$ and $\mc J$ by multiplication. As all indecomposables have bounded norms (see \cite{Br}, \cite[Theorem 5]{KY3}), there are only finitely many indecomposables up to multiplication by $\co_K^{\times, +}$.
These indecomposables have been determined in a number of cases, e.g., \cite{DS, KM, KT}. 

Consider now the Minkowski embedding $\sigma: K \hookrightarrow \mathbb{R}^n, \sigma(\alpha)=(\sigma_1(\alpha),\dots,\sigma_n(\alpha))$. 
We have that $\sigma(\co_K)\subset \R^n$ is a lattice and  $\mc S=\sigma(\co_K^+)=\sigma(\co_K)\cap (\R^+)^n$ is a semilattice, obtained as the intersection of the lattice $\sigma(\co_K)$ with the cone $(\R^+)^n$. Observe that we indeed have that $\spa_\Z(\mc S)=\sigma(\co_K)$ is a lattice.

We denote the set of indecomposables in $\mc S$ by $\mathcal I=\sigma(\mathcal J)$. 
It is well-known that every element in $\co_K^+$ and $\mc S$ is a finite sum of indecomposables, but one can also verify this using our Proposition \ref{prop:sum ind}, because we have $\overline{\conv(\mc S)}\subset \{(x_1,\dots,x_n)\in \R_{\geq 0}^n\mid x_1+\dots+x_n\geq 1\}$ (as each $\alpha\in\co_K^+$ has $\text{Tr}(\alpha)\geq 1$). This will be an important assumption in Settings \ref{set:2} and \ref{set:3} below, which is thus satisfied for $\mc S=\sigma(\co_K^+)$.

Again, the group of totally positive units  $\co_K^{\times, +}$ (or its image $\sigma(\co_K^{\times, +})$) acts on
$\mc S$ and $\mc I$ by multiplication. As there are finitely many indecomposables in $\co_K^+$  up to multiplication by $\co_K^{\times, +}$, there is a finite set $\mc I_0\subset \mc I$ that is a fundamental set for the action of $\co_K^{\times, +}$ on $\mc I$, i.e., for each $\alpha\in\mc I$ there are $\alpha_0\in\mc I_0$ and $\ve\in\co_K^{\times, +}$ such that $\alpha=\ve\alpha_0$. 
Note that here and later we use the term \textit{fundamental set} rather than \textit{fundamental domain} to emphasize that we are not requiring $\ve$ and $\alpha_0$ to be unique.

\section{Reconstructing the number field}\label{sec:5}

Let us now establish that the abstract semigroup $(\co_K^+,+)$ uniquely determines the number field $K$, i.e., Theorem \ref{th:isom}.

\begin{proof}[Proof of Theorem $\ref{th:isom}$] The implication ``$\Longrightarrow$'' is trivial, so let us prove the converse.
	As $\co_K^+$ is isomorphic to its image under the Minkowski embedding $\sigma$, we will work with the semilattice $\mc S=\sigma(\co_K^+)$. We need to show that $\mc S$, viewed as an abstract semigroup, determines $K$ uniquely up to isomorphism. We will do this in several steps.
	
	To avoid confusion, let us comment on our task explicitly: We know that $\mc S$ is defined from some totally real number field $K$, that it is a semilattice in some  $\R^n$, etc. But at the beginning, we do \textit{not} know the embedding $\mc S\subset \R^n$ (or even the degree $n$), we are given $\mc S$ only as an abstract semigroup, i.e., a set equipped with a binary operation $+$. Only using this information, we need to reconstruct the number field $K$.
	
	\medskip
	
	\textbf{(1) Determining the degree $n$ and choosing a basis.} 
\nopagebreak
	
	For a positive integer $t$, let us say that a $t$-tuple of elements $\nu_1,\dots,\nu_t\in \mc S$ is \textit{good} if it has the following property: For each $\alpha\in \mc S$, there are $m \in \Z^+$, $p_i,q_i\in\Z_{\geq 0}$ such that	
	$m\alpha + \sum_{i = 1}^{t} q_i\nu_i = \sum_{i = 1}^{t} p_i\nu_i$.
	
	From Lemma \ref{lem:rel1} we know that some good tuple exists (namely, a basis of $\R^n$, which has $n$ elements). As $\spa_\R(\mc S)=\R^n$, every good $t$-tuple generates the vector space $\R^n$, and so $t\geq n$.
	
	Thus, by taking a good tuple with smallest size, this size must equal the degree $n=[K:\Q]$. This shows that the degree $n$ is uniquely determined by the abstract semigroup $\mc S$. For the rest of the proof, fix some good $n$-tuple $\nu_1,\dots,\nu_n$; this forms an $\R$-basis $\nu_1,\dots,\nu_n\in \mc S$ of the vector space $\R^n$.
	
	By Lemmas \ref{lem:rel1} and \ref{lem:rel2}, we then know that for each $\alpha\in \mc S$, there are (essentially unique) $m \in \Z^+$, $p_i,q_i\in\Z_{\geq 0}$  such that 
	\begin{equation*}
		m\alpha + \sum_{i = 1}^{n} q_i\nu_i = \sum_{i = 1}^{n} p_i\nu_i
		\tag{$\mathcal{\rho}_{\alpha}$},\label{eq:Ra2}
	\end{equation*}

	\medskip
	
	\textbf{(2) Embedding into $\R^n$.}	
	\nopagebreak
	
	Given the basis $\nu_1,\dots,\nu_n$ that we fixed in step (1), let us define a semigroup homomorphism $\tau:\mc S\hookrightarrow\R^n$ that maps $\nu_i$ to the $i$th canonical basis vector $e_i=(0,\dots,0,1,0,\dots,0)$. More precisely, for each $\alpha\in\mc S$, we consider the unique equation \eqref{eq:Ra2} and define $\tau(\alpha)$ as the unique element of $\R^n$ that satisfies
	\begin{equation*}
		m\tau(\alpha) + \sum_{i}q_i e_i = \sum_{i} p_i e_i.
	\end{equation*}
	As $m\neq 0$, such $\tau(\alpha)$ indeed exists and is unique. 
	
	It is then straightforward to check that $\tau:\mc S\hookrightarrow\R^n$ is indeed an injective semigroup homomorphism (as it is the restriction of the $\R$-vector space isomorphism $\nu_i\mapsto e_i$).
	
	We now want to view $\tau(\mc S)$ as a semilattice in $\R^n$. For that purpose,
	let us define $\mc C=\R^+\conv(\tau(\mc S))=\{tx\mid t>0, x\in \conv(\tau(\mc S))\}$ and $\Lambda = \spa_\Z(\tau(\mc S))$. We have $\mc S=\sigma(\co_K^+)=\sigma(\co_K)\cap (\R^+)^n$ and  $\R^+\conv(\mc S)=(\R^+)^n$.	
	Thus we see that under the isomorphism $\tau^{-1}$, $\mc C$ maps to the cone $(\R^+)^n$ and $\Lambda$ maps to the lattice $\sigma(\co_K)$. Hence $\tau(\mc S)=\mc C\cap \Lambda$.
	Note that the cone $\mc C$ is closed under rays (not only under outward rays).

	\medskip
	
	\textbf{(3) Boundary rays.}
\nopagebreak
	
	Consider the closure $\overline{\mc C}$ of the cone $\mc C$ defined in step (2). By its \textit{extreme ray} we mean a subset $\mc F\subset \overline{\mc C}$ such that
	\begin{enumerate}[label=\alph*), nosep]
		\item $\text{dim(span}_\R(\mc F))=1$,
		\item $\mc F$ is convex and closed under rays,
		\item for all $a,b\in \overline{\mc C}$, we have $a+b\in\mc F\implies a,b\in\mc F$.
	\end{enumerate}
	Note that $0\in\overline{\mc C}$, and so thanks to condition c) we have $0\in\mc F$ for every extreme ray $\mc F$.
	
	The extreme rays of $\R_{\geq 0}^n=\overline{\R^+\conv(\mc S)}$ (i.e., those corresponding to the semilattice $\mc S=\sigma(\co_K^+)$) are precisely the coordinate half-axes 
	$$E_i:=\{(x_1,\dots,x_n)\in\R^n \mid x_1=\dots=x_{i-1}=0,x_i\geq 0, x_{i+1}=\dots=x_n=0\};$$ there are $n$ of them. 
	As $\mc S\simeq \tau(\mc S)$ (and this isomorphism is given by a restriction of an $\R$-vector space isomorphism), $\overline{\mc C}$ also has precisely $n$ extreme rays, let us denote them $v_1\R_{\geq 0},\dots,v_n\R_{\geq 0}$ for  vectors $v_1,\dots,v_n\in\R^n$ that are linearly independent.

	\medskip
	
	\textbf{(4) Solving equations.}\nopagebreak
	
	Let us now go back to the (so far not explicitly known) number field $K$. Recall that we fixed an $\R$-basis $\nu_1,\dots,\nu_n\in \mc S=\sigma(\co_K^+)$ of $\R^n$ and let us define $\omega_i:=\sigma^{-1}(\nu_i)\in \co_K^+$.
	
	We have $K=\omega_1\Q+\dots+\omega_n\Q$ and we can formally consider $\omega_1\R+\dots+\omega_n\R$ as an $\R$-vector space of dimension $n$. We have that
	$\sigma:K\hookrightarrow\R^n$ is defined as $\sigma(\alpha)=(\sigma_1(\alpha),\dots,\sigma_n(\alpha))$, where $\sigma_i:K\hookrightarrow\R$ are all the real embeddings. If we denote $\omega_i^{(j)}:=\sigma_j(\omega_i)$, then we have
	$$\sigma(\sum_i a_i\omega_i)=\left(\sum_i a_i\omega_i^{(1)}, \dots, \sum_i a_i\omega_i^{(n)} \right).$$
	By allowing the coefficients in this formula to be real, i.e., $a_i\in\R$, we extend $\sigma$ to a vector space isomorphism 
	$\sigma:\omega_1\R+\dots+\omega_n\R\simeq \R^n$.
	
	The vectors $v_1,\dots,v_n\in\R^n$ spanning the extreme rays $v_1\R_{\geq 0},\dots,v_n\R_{\geq 0}$ from step (3) correspond to the coordinates in the basis $\nu_1,\dots,\nu_n$ (recall the definition of $\tau$ in step (2)).
	Under the isomorphism $\sigma$, the vector 
	$$v_j=(v_{j1},\dots,v_{jn}) \text{ thus corresponds to the vector }\left(\sum_i v_{ji}\omega_i^{(1)}, \dots, \sum_i v_{ji}\omega_i^{(n)} \right).$$ 
	As $\sigma$ is the Minkowski embedding and we are considering the extreme rays of the cone generated by the totally positive elements, these rays must be just the coordinate half-axes $E_i$.
	We cannot recover the order of the axes (i.e., there is a permutation $\pi$ of $\{1,\dots,n\}$ such that the ray $v_i\R_{\geq 0}$ corresponds to $E_{\pi(i)}$), but permuting the coordinates $x_i$ corresponds to permuting the embeddings $\sigma_i$ -- therefore, without loss of generality, we can just renumber the embeddings $\sigma_i$ so that $\pi=\text{id}$, i.e.,  $v_i\R_{\geq 0}$ corresponds to $E_{i}$.
	
	Then 
	$$\left(\sum_i v_{ji}\omega_i^{(1)}, \dots, \sum_i v_{ji}\omega_i^{(n)} \right)\in E_j,$$
	i.e., 
	\begin{equation*}
		\sum_i v_{ji}\omega_i^{(h)}=0\text{ for all } h\neq j.
	\end{equation*}
	
	Consider now the system of equations above corresponding to $h=1$, i.e.,
	\begin{align*}
		\sum_i v_{2i}X_i&=0\\
		\sum_i v_{3i}X_i&=0\\
		\vdots&\\
		\sum_i v_{ni}X_i&=0		
	\end{align*}
	This is a homogeneous system of $n-1$ equations in $n$ variables $X_1,\dots,X_n$ (with coefficients $v_{ji}\in\R$). We know that the system has a solution $X_i=\omega_i^{(1)}$; moreover, $\omega_i^{(1)}\neq 0$ for all $i$ (as they are images of the $\Q$-basis $\omega_i$ of $K$ under an embedding $\sigma_1$). 
	In step (3) we established that the vectors $v_2,\dots,v_n\in\R^n$ are linearly independent, and so the system has a \textit{unique} solution such that $X_1=1$ (namely, $X_i=\omega_i^{(1)}/\omega_1^{(1)}$).
	
	\medskip
	
	\textbf{(5) Recovering $K$.}\nopagebreak
	
	Thus we have defined the number field
	$K':=\Q\left( X_2,\dots,X_n\right) $. As established in (4), we have $X_i=\omega_i^{(1)}/\omega_1^{(1)}$, but we still do not know the elements $\omega_i^{(1)}$ or the number field $\Q\left( \omega_1^{(1)},\omega_2^{(1)},\dots,\omega_n^{(1)}\right) \simeq K$. 
	
	The elements $\omega_1^{(1)},\omega_2^{(1)},\dots,\omega_n^{(1)}$ are $\Q$-linearly independent, and so also $\omega_1^{(1)}/\omega_1^{(1)}=1,$  $\omega_2^{(1)}/\omega_1^{(1)},\dots,\omega_n^{(1)}/\omega_1^{(1)}$ are $\Q$-linearly independent. Therefore the number field $K'$ has degree at least $n$ over $\Q$. At the same time, $K'$ is a subfield of the degree $n$ field $\Q\left( \omega_1^{(1)},\omega_2^{(1)},\dots,\omega_n^{(1)}\right) \simeq K$, and so we must have 
	$$K'=\Q\left( \omega_2^{(1)}/\omega_1^{(1)},\dots,\omega_n^{(1)}/\omega_1^{(1)}\right)=\Q\left( \omega_1^{(1)},\omega_2^{(1)},\dots,\omega_n^{(1)}\right) \simeq K,$$
	which recovers $K$ as we wanted.
	 Note that this indeed determines $K$ only up to isomorphism, for $\omega_j^{(1)}=\sigma_1(\omega_j)$ (and $\omega_1,\dots,\omega_n$ was a basis of $K$ itself).
\end{proof}

\section{Presentations of semilattices}\label{sec:6}

In Proposition \ref{prop:sum ind} we considered the condition that every element of a semilattice $\mc S$ is a sum of indecomposables. This condition can be reformulated as saying that the set of indecomposables $\mc I$ \textit{generates} $\mc S$ as a semigroup. 

We will now aim to describe all the (additive) relations that hold between indecomposables, in order to fully describe the additive structure of the semilattice.
The right notion for doing this is that of a presentation (coming from universal algebra), which we now introduce. For more background, see  \cite[Chapter I, Section 6]{Gr}.

First, for a nonempty set $X$ (that can be finite or infinite), the \textit{free commutative semigroup} over $X$ is a commutative semigroup $F_X\supset X$
uniquely determined by the following universal property: For every commutative semigroup $S$ and a map $f:X\rightarrow S$, there is a unique semigroup homomorphism $F_X\rightarrow S$ that extends $f$.
Concretely, the free commutative semigroup can be viewed as the set of all the sums of elements of $X$, i.e., 
\[F_X=\left\{\sum_{x\in X} n_x x\mid n_x\in\Z_{\geq 0}, \text{ finitely many coefficients }n_x\text{ are nonzero}\right\}\]
with the addition defined as  $\sum_{x\in X} n_x x+\sum_{x\in X} n_x' x=\sum_{x\in X} (n_x+n_x') x$.
Let us stress that $0\not\in F_X$, i.e., we require that at least one coefficient $n_x$ is nonzero.

It is easy to observe that $F_X$ is cancellative and torsion-free.

\medskip

Let $S$ be a commutative semigroup. A \textit{congruence} $\sim$ on $S$ is an equivalence on $S$ that is preserved under $+$, i.e., for all $x,y,z\in S$ we have $x\sim y\implies x+z\sim y+z$. 
Note that formally, a congruence is a subset of $S\times S$, i.e., ${\sim}  \subset S\times S$.

Given a congruence $\sim$, we can define the \textit{factorsemigroup} $S/{\sim}$ as the set of all blocks $[x]_\sim$ of the equivalence $\sim$ with addition defined as $[x]_\sim+[y]_\sim:=[x+y]_\sim$. This is again a commutative semigroup, and we have a surjective semigroup homomorphism $S\twoheadrightarrow S/{\sim}$.

Let $S$ be a commutative semigroup that is generated by its subset $X\subset S$ (i.e., $S$ is the smallest subsemigroup containing $X$).
Thanks to the universal property, there is a unique semigroup homomorphism $\pi:F_X\rightarrow S$ that fixes $X$. This homomorphism defines a congruence on $F_X$, namely, $x\sim_\pi y\Leftrightarrow \pi(x)=\pi(y)$. We then have a semigroup isomorphism $F_X/{\sim_\pi}\simeq S$.
Thus, every commutative semigroup $S$ is isomorphic to a factor of the free commutative semigroup $F_X$ over 
arbitrary generating set $X$ of $S$.

A \textit{presentation} of a commutative semigroup $S$ is a pair  $\langle X \mid R \rangle$ where $X$ is a set, $R\subset F_X\times F_X$ is a set of relations, and $S\simeq F_X/{\sim_R'}$, where $\sim_R'$ is the smallest congruence on $F_X$ containing $R$. Let us note that here we think of elements of $F_X\times F_X$ as relations, as they tell us which equations hold in the factor $S\simeq F_X/{\sim_R'}$.

As we are interested in semilattices, i.e., commutative semigroups that are also cancellative and torsion-free, it will be convenient for us to work with a notion of presentation that has these properties built in. In the following definition, ``CT'' stands for cancellative, torsion-free.

\begin{definition}\label{de:CT} 
	Let $S$ be a cancellative, torsion-free, commutative semigroup.
	A \textit{CT-congruence} $\sim$ on $S$ is a congruence that satisfies for all $x,y,z\in S$ and $n\in\Z^+$
	\begin{itemize}
		\item[(C)] $x+z\sim y+z\implies x\sim y$, and
		\item[(T)] $nx \sim ny \implies x\sim y$ (where $nx$ denotes the $n$-times iterated addition $x+x+\dots+x$).
	\end{itemize}

Let $X$ be a set and $R\subset F_X\times F_X$ a set of relations.
Then there exists the smallest CT-congruence on $F_X$ containing $R$, namely, the intersection of all CT-congruences that contain $R$. We call it the \textit{CT-congruence generated by $R$}.

Finally,  $\langle X \mid R \rangle_{CT}$ is a 
\textit{CT-presentation} of $S$ if $S\simeq F_X/{\sim_R}$, where $\sim_R$ is the CT-congruence on $F_X$ generated by $R$.
\end{definition}

Let us now go back to our study of semilattices. We are interested in finding a CT-presentation of a semilattice in terms of its indecomposables, for which we need to assume that the indecomposables generate our semilattice (as discussed at the end of Section \ref{sec:2}).

First, observe that if the indecomposables $\mc I$ generate $\mc S$, we have $\spa_{\R}(\mc{I})=\spa_{\R}(\mc{S})=V$, and so there is an $\R$-basis of $V$ consisting of elements of $\mc I$.
We will fix some such basis; the following setting will be in force till the end of the paper.

\begin{settings} \label{set:2}
	Let $V$ be a $\mathbb{R}$-vector space of dimension $n$ and  $\mathcal{S} \subset V$ a semilattice.  
	
	Let $\mc I$ be the set of all indecomposables in $\mc S$, \textbf{assume} that every element of $\mc S$ is a sum of indecomposables, and  fix an $\R$-basis  $\nu_1,\dots,\nu_n\in\mc I$ of $V$.

    For each $\alpha \in \mathcal{I}$ we define a formal variable $x_\alpha$, and we let $I = \{x_\alpha \mid \alpha \in \mathcal{I}\}$ be the set of all formal variables. Let $F_I$ be the free commutative semigroup over $I$.

    For each $\alpha \in \mathcal{I}$, we now consider the unique equation \eqref{eq:Ra} and define the corresponding relation on $F_I$ as $r_\alpha:=(u_\alpha, v_\alpha)\in F_I\times F_I$ where
    \[u_{\alpha} := m x_{\alpha} + \sum_{i=1}^n q_ix_{\nu_i}\text{ and } v_{\alpha} := \sum_{i=1}^n p_ix_{\nu_i}.\]
    
    Let $R = \{r_{\alpha} \mid \alpha \in \mathcal{I}\}\subset F_I\times F_I$. 
 \end{settings}

This gives us the desired CT-presentation of $\mc S$.

\begin{theorem}\label{th:pres1} 
	Assume Setting $\ref{set:2}$. Then
    $\langle I \mid R \rangle_{CT}$ is a CT-presentation of $\mathcal{S}$. 
\end{theorem}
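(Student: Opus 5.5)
Let $\pi:F_I\to\mc S$ be the semigroup homomorphism with $\pi(x_\alpha)=\alpha$. It is surjective because, by the assumption in Setting \ref{set:2}, $\mc I$ generates $\mc S$. So $\mc S\simeq F_I/{\sim_\pi}$, and it suffices to show that $\sim_\pi$ equals the CT-congruence $\sim_R$ generated by $R$.

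First, $\sim_\pi$ is a CT-congruence containing $R$. It is a congruence since $\pi$ is a homomorphism. It satisfies (C) and (T) because $\mc S\subset V$ is cancellative and torsion-free (torsion-freeness holds since $V$ is an $\R$-vector space). It contains every $r_\alpha$, since $\pi(u_\alpha)=\pi(v_\alpha)$ is exactly the equation \eqref{eq:Ra}. Minimality of $\sim_R$ then gives ${\sim_R}\subset{\sim_\pi}$.

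For the reverse inclusion, let $w,w'\in F_I$ with $\pi(w)=\pi(w')$. The idea is to use the relations $r_\alpha$ to rewrite a multiple of $w$ as a formal combination of the basis variables $x_{\nu_i}$ alone. Write $w=\sum_\alpha n_\alpha x_\alpha$, and let $M$ be a common multiple of the finitely many $m_\alpha$ with $n_\alpha\neq0$. For each such $\alpha$, adding $\frac{M}{m_\alpha}n_\alpha$ copies of $r_\alpha$ (allowed since $\sim_R$ is a congruence) gives
\[Mw+\sum_i Q_i x_{\nu_i}\ \sim_R\ \sum_i P_i x_{\nu_i}\]
for some $P_i,Q_i\in\Z_{\geq0}$. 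Applying $\pi$ shows that $P_i-Q_i$ is the $i$-th coordinate of $M\pi(w)$ in the basis $\nu_1,\dots,\nu_n$.

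Do the same for $w'$, and use one common $M$ for both (replace each $M$ by their lcm and scale). This yields
\[Mw+\textstyle\sum Q_i x_{\nu_i}\sim_R \sum P_i x_{\nu_i},\qquad Mw'+\sum Q'_i x_{\nu_i}\sim_R \sum P'_i x_{\nu_i},\]
and since $\pi(w)=\pi(w')$ we have $P_i-Q_i=P'_i-Q'_i$ for all $i$. Adding $\sum Q'_i x_{\nu_i}$ to the first relation and $\sum Q_i x_{\nu_i}$ to the second gives
\[Mw+\textstyle\sum(Q_i+Q'_i)x_{\nu_i}\sim_R \sum(P_i+Q'_i)x_{\nu_i}\quad\text{and}\quad Mw'+\sum(Q_i+Q'_i)x_{\nu_i}\sim_R\sum(P'_i+Q_i)x_{\nu_i}.\]
The two right-hand sides are identical elements of $F_I$, because $P_i+Q'_i=P'_i+Q_i$. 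Hence
\[Mw+\textstyle\sum(Q_i+Q'_i)x_{\nu_i}\ \sim_R\ Mw'+\sum(Q_i+Q'_i)x_{\nu_i}.\]
Property (C) cancels the common term, giving $Mw\sim_R Mw'$, and property (T) then gives $w\sim_R w'$.

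There is one degenerate case in the cancellation step: the common term may be empty when all $Q_i,Q'_i=0$. This is harmless, since $0\notin F_I$ and we simply skip the cancellation; alternatively, add $x_{\nu_1}$ to both sides first. The only real work is the bookkeeping with coefficients. The key observation is that (C) and (T) are exactly what absorb the auxiliary $q_i$ terms and the multiplier $m$ of each relation $r_\alpha$; without CT this presentation would not suffice.
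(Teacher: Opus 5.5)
Your proof is correct and follows essentially the same route as the paper: you show the kernel congruence of $F_I\to\mathcal S$ is a CT-congruence containing $R$. For the reverse inclusion you multiply by a common multiple $M$ of the $m_\alpha$, use the $r_\alpha$ to reduce both sides to combinations of the $x_{\nu_i}$, compare coordinates in the basis $\nu_1,\dots,\nu_n$, and finish with (C) and (T). The only difference is bookkeeping: you add copies of the $r_\alpha$ directly in $F_I$ and balance the auxiliary $q_i$-terms, which makes explicit the paper's step of adding $\sum_i r\nu_i$ with $r$ large, substituting, and running the substitutions backwards.
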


\begin{proof}
     We consider the homomorphism $\tau: F_I \to \mc S$ that is uniquely determined by $\tau (x_\alpha) = 
     \alpha$. The indecomposables $\mathcal{I}$ generate $\mathcal{S}$, and so $\tau $ is surjective. 
     As before, we let $\sim$ be the congruence on $F_I$ defined by $x \sim y$ if and only if $\tau (x) = \tau (y)$, so that we have $F_I / {\sim}\simeq \mc S$.
	As $\mc S$ is cancellative and torsion-free, it is easy to see that $\sim$ is a CT-congruence. As each equation \eqref{eq:Ra} holds in $\mc S$, we have $R\subset {\sim}$.

   Now let ${\sim_R}$ be the CT-congruence on $F_I$ generated by $ R $. Since $\sim$ is a CT-congruence that contains $R$, we have $ {\sim_R} \subset {\sim}$. In order to show that $\langle I \mid R \rangle_{CT}$ is a CT-presentation of $\mathcal{S}$, we need to show that $\mc S\simeq F_I/{\sim_R}$. As $F_I / {\sim}\simeq \mc S$, this amounts to showing $ {\sim_R} = {\sim}$. Thus we need to show $ {\sim_R} \supset {\sim}$, as we already know the other inclusion.
   
   \medskip
   
   Therefore assume that $x\sim y$ for some $x,y\in F_I$; we want to show $x\sim_R y$.
   Let $x=\sum_{\alpha\in\mc I} n_\alpha x_{\alpha}$, $y=\sum_{\alpha\in\mc I} n_\alpha' x_{\alpha}$.
   By the definition of $\sim$, we have $\tau(x)=\tau(y)$ in $\mc S$, i.e., 
   \begin{equation}\label{eq:1}
   	\sum_{\alpha\in\mc I} n_\alpha {\alpha}=\sum_{\alpha\in\mc I} n_\alpha' {\alpha}.
   \end{equation}
   
   We want to now use the equations \eqref{eq:Ra} to rewrite \eqref{eq:1} into an equation that involves only the basis elements $\nu_1,\dots,\nu_n\in\mc I$.
   For that purpose, let first $\mc A\subset\mc I$ be the finite set of indecomposables $\alpha$ for which $n_\alpha$ or $n_\alpha'$ is nonzero (i.e., which appear in \eqref{eq:1}).
      
   Let $M$ be the least common multiple of the coefficients $m$ from all the equations \eqref{eq:Ra} for $\alpha\in\mc A$. We multiply \eqref{eq:1} by $M$, and then add $\sum_{i=1}^n r\nu_i$ to both sides of the resulting equation, for a sufficiently large positive integer $r$; we obtain
     \begin{equation}\label{eq:2}
    	\sum_{\alpha\in\mc I} n_\alpha M{\alpha}+\sum_{i=1}^n r\nu_i=\sum_{\alpha\in\mc I} n_\alpha' M{\alpha}+\sum_{i=1}^n r\nu_i.
    \end{equation}
    The integer $r$ is chosen large enough so that we can repeatedly apply \eqref{eq:Ra} for all $\alpha\in \mc A$ to both sides of \eqref{eq:2} in order to replace all the occurrences of $\alpha$ in this equation (that is also why we multiplied by $M$ that is divisible by all the coefficients $m$), so that we arrive at an equation of the form
    \begin{equation}\label{eq:3}
   \sum_{i=1}^n s_i\nu_i=\sum_{i=1}^n t_i\nu_i
   \end{equation}
   for some $s_i,t_i\in\Z_{\geq 0}$.
   This equation holds in $\mc S\subset V$, and $\nu_1,\dots,\nu_n$ form a basis of $V$, and so $s_i=t_i$ for all $i$.
   
   \medskip
   
   Now we can proceed backwards in order to deduce $x\sim_R y$. We start with the tautological relation 
   $\sum_{i=1}^n s_ix_{\nu_i}\sim_R \sum_{i=1}^n s_ix_{\nu_i}$, to which we apply the relations $r_\alpha$ in exactly the opposite way as we did going from \eqref{eq:2} to \eqref{eq:3}. This leads to the analogue of \eqref{eq:2}, namely,
   \begin{equation}\label{eq:4}
    	\sum_{\alpha\in\mc I} n_\alpha Mx_{\alpha}+\sum_{i=1}^n rx_{\nu_i}\sim_R \sum_{\alpha\in\mc I} n_\alpha' Mx_{\alpha}+\sum_{i=1}^n rx_{\nu_i}.
    \end{equation}
    
   As $\sim_R$ is a CT-congruence, we can use the property (C) of Definition \ref{de:CT} to ``subtract'' $\sum_{i=1}^n rx_{\nu_i}$ from both sides of \eqref{eq:4}, and then use the property (T) to ``divide'' by $M$, so that we finally arrive at
      \begin{equation}\label{eq:5}
   	x=\sum_{\alpha\in\mc I} n_\alpha x_{\alpha}\sim_R \sum_{\alpha\in\mc I} n_\alpha' x_{\alpha}=y,
   \end{equation}
   as we wanted.
\end{proof}

\section{Finitely generated set of relations}\label{sec:7}

While the presentation given in Theorem \ref{th:pres1} is concrete, it has the disadvantage that it involves one relation $r_\alpha$ for each indecomposable $\alpha$, with no way of a priori knowing the values of the coefficients $m,p_i,q_i$ that come from \eqref{eq:Ra}.
In a general semilattice, it is unclear whether we can do better. But the semilattices defined in totally real number fields have only finitely many indecomposables up to multiplication by totally positive units, which suggests that there may be a finite set of relations that, in a suitable sense, gives the presentation up to multiplication by units.

In order to formalize this, we formulate things somewhat more generally, in terms of automorphisms acting on a semilattice $\mc S$ (that, in the number field setting, are given precisely by the multiplication by units).

\begin{settings}\label{set:3}
    Assume Setting \ref{set:2}. Let $\Phi_0$ be a set of semigroup automorphisms of $\mathcal{S}$ that contains the identity and is closed under inverses (i.e., $\text{id}\in \Phi_0$ and if $\varphi\in\Phi_0$, then $\varphi^{-1}\in\Phi_0$). Let $\Phi$ be the group generated by $\Phi_0$ 
    and let 
    \[\left\{g(\nu_i)\mid g\in\Phi_0, i=1,\dots,n\right\}\subset \mc I_0 \subset \mc I\] be a set of indecomposables that is a fundamental set in the sense that for each $\alpha\in\mc I$, there are (not necessarily unique) $\varphi\in \Phi$ and $\alpha_0\in\mc I_0$ such that $\alpha=\varphi(\alpha_0)$. 
    
    For $x=\sum_{\alpha\in\mc I} n_\alpha x_{\alpha}\in F_I$ and $\varphi\in\Phi$, define
    $\varphi(x):=\sum_{\alpha\in\mc I} n_\alpha x_{\varphi(\alpha)}\in F_I$. 
    
    For a relation $r=(u,v)\in F_I\times F_I$, we define $\varphi(r):=(\varphi(u),\varphi(v))$.

We can now define a new set of relations 
\[
R':=
\{\varphi(r_{\alpha})\mid \varphi\in\Phi,\alpha\in\mc I_0\}.
\]
\end{settings}

\medskip

\begin{example} \label{ex:nf}
	Let us follow Section \ref{sec:4} and all the notation introduced there. 
The semilattice $\mc S=\sigma(\co_K^+)$ of totally positive integers in a totally real number field $K$ precisely fits in Setting \ref{set:3}: 
We can take $\Phi=\co_K^{\times,+}$, with its free generators $\ve_1,\dots,\ve_{n-1}$, and let $\Phi_0=\{\text{id},\ve_1^{\pm1}, \dots,\ve_{n-1}^{\pm1}\}$. We view $\Phi$ as automorphisms of $\mc S$ through the action of multiplication by units (concretely, the action of $\ve\in\Phi$ on $\alpha\in\mc S\subset(\R^+)^n$ sends it to the coordinate-wise product $\sigma(\ve)\alpha\in\mc S\subset(\R^+)^n$).

Since there are finitely many indecomposables up to multiplication by $\co_K^{\times,+}$, there indeed is a \textit{finite} fundamental set $\mc I_0' \subset \mc I$, which we can enlarge so that it contains the elements $g(\nu_i)$ for $g\in\Phi_0, i=1,\dots,n$, yielding the desired fundamental set $\mc I_0$.

In this case, $\Phi_0$ and $\mc I_0$ are both finite.
\end{example}

Note that in the setting, we have not posed any assumptions concerning the finiteness of $\Phi_0$  or $\mc I_0$ (as we do not need them). However, when both $\Phi_0$  and $\mc I_0$ are finite (as is the case in the number field Example \ref{ex:nf}), then 
 the set $R'$ is indeed finite up to the action of $\Phi$, as it consists of all the translates by $\Phi$ of the finitely many relations $r_\alpha$, $\alpha\in\mc I_0$.
 
 Also note that when the set of indecomposables $\mc I$ is infinite, then we cannot do much better, as each indecomposable must appear in some relation, and so the generating set of relations for every CT-presentation must be infinite. However, our fundamental set $\mc I_0$ of course could be made smaller (as it currently contains the equivalent elements $\nu_1,\varphi(\nu_1),\varphi^{-1}(\nu_1)$ for each $\varphi\in\Phi_0$, e.g.). An open question remains whether (and how) it is possible to do this; this may of course involve considering different relations than our $r_\alpha$.

\begin{theorem}  \label{th:pres2}
	Assume Settings $\ref{set:2}$ and $\ref{set:3}$. Then
    $\langle I \mid R' \rangle_{CT}$ is a CT-presentation of $\mathcal{S}$.
\end{theorem}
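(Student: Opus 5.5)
The plan is to reduce Theorem \ref{th:pres2} to Theorem \ref{th:pres1} by showing that ${\sim_{R'}}={\sim_R}$, where $\sim_{R'}$ is the CT-congruence generated by $R'$. Let $\sim$ be the kernel congruence of $\tau:F_I\to\mc S$ as in the proof of Theorem \ref{th:pres1}.

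\textbf{The easy inclusion.} Each $\varphi\in\Phi$ is a semigroup automorphism of $\mc S$, so applying $\varphi$ to the equation \eqref{eq:Ra} gives a valid equation in $\mc S$. Hence $R'\subset{\sim}$. Since $\sim$ is a CT-congruence, this gives ${\sim_{R'}}\subset{\sim}={\sim_R}$.

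\textbf{The converse inclusion.} It suffices to show $R\subset{\sim_{R'}}$, i.e.\ $r_\alpha\in{\sim_{R'}}$ for every $\alpha\in\mc I$; then ${\sim_R}\subset{\sim_{R'}}$ by minimality. Fix $\alpha\in\mc I$ and write $\alpha=\varphi(\alpha_0)$ with $\alpha_0\in\mc I_0$ and $\varphi=g_1g_2\cdots g_k$, where $g_j\in\Phi_0$. This is possible because $\Phi_0$ contains the identity, is closed under inverses, and generates $\Phi$. The relation $\varphi(r_{\alpha_0})\in R'$ expresses a multiple of $x_\alpha$, up to added terms, via the elements $x_{\varphi(\nu_i)}$.

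Now set $h=g_1\cdots g_{k-1}$. Since $g_k(\nu_i)\in\mc I_0$ by Setting \ref{set:3}, the relation $h(r_{g_k(\nu_i)})\in R'$ links $x_{\varphi(\nu_i)}$ with the elements $x_{h(\nu_j)}$. Iterating down the word length $k$, we eventually reach the elements $x_{\nu_j}$ themselves.

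\textbf{Chaining the relations.} We combine these relations exactly as in the proof of Theorem \ref{th:pres1}.
\begin{itemize}
\item Multiply each relation by a suitable common multiple of the coefficients $m$, which is allowed since $\sim_{R'}$ is a congruence.
\item Add a large multiple of all the intermediate variables $x_{\varphi(\nu_i)},x_{h(\nu_j)},\dots$ to both sides, so that each substitution can be performed.
\item Substitute step by step, then cancel the auxiliary terms using (C).
\end{itemize}
This yields a relation
\[
m'x_\alpha+\sum_{i=1}^n q_i'x_{\nu_i}\ \sim_{R'}\ \sum_{i=1}^n p_i'x_{\nu_i}
\]
with $m'\in\Z^+$ and $p_i',q_i'\in\Z_{\geq0}$. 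Because ${\sim_{R'}}\subset{\sim}$, the corresponding equation holds in $\mc S$.

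\textbf{Recovering $r_\alpha$.} By Lemma \ref{lem:rel2}, we have $m'=\ell m$, $p_i'=\ell p_i+r_i$ and $q_i'=\ell q_i+r_i$ with $\ell\in\Z^+$ and $r_i\geq0$. Using (C) to cancel $\sum_i r_ix_{\nu_i}$ from both sides, we get $\ell u_\alpha\sim_{R'}\ell v_\alpha$. Then (T) gives $u_\alpha\sim_{R'}v_\alpha$, i.e.\ $r_\alpha\in{\sim_{R'}}$.

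\textbf{Main obstacle.} The delicate step is the chaining. Intermediate variables may appear on both sides of the relations with nonnegative coefficients only, so the substitution has to be organized carefully. I would handle it by induction on $k$, proving the following statement. For every $h\in\Phi$ that is a product of at most $k$ elements of $\Phi_0$, and every $i$, there is a relation
\[
Nx_{h(\nu_i)}+\sum_j a_jx_{\nu_j}\ \sim_{R'}\ \sum_j b_jx_{\nu_j}
\]
with $N\in\Z^+$ and $a_j,b_j\in\Z_{\geq0}$. In the inductive step one adds these relations, with large multiples, to $h(r_{g(\nu_i)})$ and cancels via (C). The final step for $\alpha$ is then the same combination applied to $\varphi(r_{\alpha_0})$.
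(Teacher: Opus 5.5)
Your proposal is correct and follows the paper's proof essentially step for step. It has the easy inclusion $R'\subset{\sim}$, then induction on the word length $k$ using $g_1\cdots g_{k-1}(r_{g_k(\nu_j)})\in R'$ and $g_1\cdots g_k(r_{\alpha_0})\in R'$ to chain down to the basis variables $x_{\nu_i}$, and finally Lemma \ref{lem:rel2} with (C) and (T) to recover $r_\alpha$ exactly. The differences are only cosmetic: your invariant (some relation of the right shape for $h(\nu_i)$) is equivalent to the paper's $r_{h(\nu_i)}\in{\sim'}$ by Lemma \ref{lem:rel2}, and the padding that makes substitutions possible should be by multiples of the basis variables $x_{\nu_i}$ (as in the paper, or by adding whole relations and cancelling, as in your last paragraph), not by multiples of the intermediate variables.
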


\begin{proof}
Let $\sim, \sim'$ be the CT-congruences generated by $R, R'$, respectively.	By Theorem \ref{th:pres1}, we have $\mc S\simeq F_I/{\sim}$, and so we need to show that ${\sim}={\sim}'$.
	For that, it suffices to show that $R\subset{\sim}'$ and $R'\subset{\sim}$.

	The inclusion $R'\subset{\sim}$ holds obviously, as all the equations corresponding to the relations in $R'$ under the isomorphism $F_I/{\sim}\simeq\mc S$ hold in $\mc S$.
	
	For the other inclusion, let us show that $r_\alpha\in{\sim'}$ for all $\alpha\in\mc I$.
	Since $\mc I_0$ is a fundamental set for the action by $\Phi$, there is $\alpha_0\in\mc I_0$ and $\varphi\in\Phi$ such that $\alpha=\varphi(\alpha_0)$.
	Since $\Phi$ is generated by $\Phi_0$ (which is closed under inverses), there are $g_1,g_2,\dots, g_k \in \Phi_0$ such that $\alpha = g_1   \cdots   g_k (\alpha_0)$. 

Let us show that $r_\alpha\in{\sim'}$ by induction on the length of the expression $k$.

  If $k=0$, then $\alpha=\alpha_0\in\mc I_0$. Thus we directly have $r_\alpha=\text{id}(r_\alpha)\in R'$.
  
  For the induction step, let us fix $k\geq 1$ and assume that for all $g_1,\dots,g_{k-1}\in\Phi_0$ and all $\alpha_0\in\mc I_0$ 
  we have
  $r_{g_{1}   \dots   g_{k - 1} (\alpha_0)}\in{\sim'}$. 
 We want to show that $r_\alpha\in{\sim'}$ for our $\alpha = g_1  \cdots  g_k (\alpha_0)$.

 Let us start with the relations
 \begin{align*}
 	r_{\alpha_0} &=: \left( m_0 x_{\alpha_0} + \sum_{i=1}^n q_{0i}x_{\nu_{i}}, \sum_{i=1}^n p_{0i}x_{\nu_i}\right) \in R' \text{ and} \\
 	g_{1}   \cdots   g_k (r_{\alpha_0}) &= \left( m_0 x_{\alpha} + \sum_{i=1}^n q_{0i}x_{g_{1}   \cdots   g_k (\nu_{i})}, \sum_{i=1}^n p_{0i}x_{g_{1}   \cdots   g_k (\nu_i)}\right) \in R'.
 \end{align*}
 
 We also have for each $j$
  \begin{align*}
 	r_{g_k(\nu_j)} &=: \left( m_j x_{g_k(\nu_j)} + \sum_{i=1}^n q_{ji}x_{\nu_{i}}, \sum_{i=1}^n p_{ji}x_{\nu_i}\right) \in R' \text{ and} \\
 	g_{1}   \cdots   g_{k-1} (r_{g_k(\nu_j)}) &= \left( m_j x_{g_{1}   \cdots   g_k(\nu_j)} + \sum_{i=1}^n q_{ji}x_{g_{1}   \cdots   g_{k-1} (\nu_{i})}, \sum_{i=1}^n p_{ji}x_{g_{1}   \cdots   g_{k-1} (\nu_i)}\right) \in R'.
 \end{align*}

 By the induction hypothesis, the relations $r_{g_{1}   \cdots   g_{k-1} (\nu_i)}\in{\sim'}$ for all $i$. 
  As in the proof of Theorem \ref{th:pres1}, we can multiply both sides of $g_{1}   \cdots   g_{k-1} (r_{g_k(\nu_j)})$ by $M$ and then add $\sum_{i=1}^n rx_{\nu_i}$ to both sides of the resulting relation, for sufficiently large positive integers $r,M$, so that we can apply $r_{g_{1}   \cdots   g_{k-1} (\nu_i)}$ to both sides in order to rewrite the relation as
 \begin{align}\label{eq:6.1}
 	& \left( m_j' x_{g_{1}   \cdots   g_k(\nu_j)} + \sum_{i=1}^n q_{ji}'x_{\nu_{i}}, \sum_{i=1}^n p_{ji}'x_{\nu_i} \right) \in {\sim'},	
 \end{align}
 for some $m_j'\in\Z^+$ and $p_{ji}', q_{ji}'\in\Z_{\geq0}$.

 Now we want to ``plug-in'' these relations into the relation $g_{1}   \cdots   g_k (r_{\alpha_0})\in R'$ that we obtained above. For that purpose, we again suitably multiply and add to both sides of $g_{1}   \cdots   g_k (r_{\alpha_0})$, so that we can apply \eqref{eq:6.1} on both sides in order to rewrite the relation as 
 \[
 \left( m' x_{\alpha} + \sum_{i=1}^n q_{i}'x_{\nu_{i}}, \sum_{i=1}^n p_{i}'x_{\nu_i}\right) \in {\sim'}.
 \]

 This relation has the same shape as $r_{\alpha}= \left( m x_{\alpha} + \sum_{i=1}^n q_{i}x_{\nu_{i}}, \sum_{i=1}^n p_ix_{\nu_i}\right) \in F_I\times F_I$ and it holds in $\mc S$ (for all relations in $R'$ hold there, and thus also all relations in $\sim'$ hold). Thus we can use the uniqueness result of Lemma \ref{lem:rel2} which says that
 there exist $\ell \in\Z^+,r_i \in\Z_{\geq 0} $ such that $m' = \ell m$ and $p_i' = \ell p_i + r_i$ and $q_i' = \ell q_i + r_i$.
 
 In other words, we have 
 \begin{equation}\label{eq:6.2}
 	\left( \ell m x_{\alpha} + \sum_{i=1}^n (\ell q_i + r_i)x_{\nu_{i}}, \sum_{i=1}^n (\ell p_i + r_i)x_{\nu_i}\right) \in {\sim'}. 	
 \end{equation}
 As $\sim'$ is a CT-congruence, we can use first property (C) from Definition \ref{de:CT} to cancel 
 the term $\sum_{i=1}^n  r_ix_{\nu_i}$ on both sides of relation \eqref{eq:6.2}, and then the property (T) to divide by $\ell $. Doing that, we arrive at 
 \[r_{\alpha}= \left( m x_{\alpha} + \sum_{i=1}^n q_{i}x_{\nu_{i}}, \sum_{i=1}^n p_ix_{\nu_i}\right) \in {\sim'},\]
 as we wanted.
\end{proof}


\begin{thebibliography}{abcde}
	
	\bibitem[AF+]{AF+} I. Araujo, B. Frederickson, R. A. Krueger, B. Lidický, T. B. McAllister, F. Pfender, S. Spiro, E. N. Stucky, Triangle percolation on the grid, Discrete Comput. Geom. \textbf{73} (2025), 569--593


     


    \bibitem[BH]{BH} M. Bhargava, J. Hanke, \emph{Universal quadratic forms and the 290-theorem}, preprint



    \bibitem[BK]{BK} V. Blomer, V. Kala, \emph{Number fields without universal $n$-ary quadratic forms}, Math. Proc. Cambridge Philos. Soc. \textbf{159} (2015), 239--252



    \bibitem[Br]{Br} H. Brunotte, \textit{Zur Zerlegung totalpositiver Zahlen in Ordnungen totalreeller algebraischer Zahlkörper}, Arch. 	Math. (Basel) \textbf{41} (1983), 502--503

    \bibitem[CdS+]{CdSL+} G. Cornelissen, B. de Smit, X. Li, M. Marcolli, H. Smit, \emph{Characterization of global fields by Dirichlet $L$-series}, Res. Number Theory \textbf{5} (2019), Paper No.~7, 15 pp.
	
	\bibitem[CO]{CO} W. K. Chan, B.-K. Oh,   \emph{Can we recover an integral quadratic form by representing all its subforms?}, Adv. Math. \textbf{433} (2023),  Paper No. 109317, 20 pp.

    \bibitem[Cu]{Cu} F. Curtis, \emph{On formulas for the Frobenius number of a numerical semigroup}, Math. Scand. \textbf{67} (1990), 190--192
	

    \bibitem[De]{De} C. Delorme, \emph{Sous-mono\"\i des d'intersection compl\`ete de $\mathbf{N}$}, Ann. Sci. \'Ecole Norm. Sup. (4) \textbf{9} (1976),  145--154

    \bibitem[DS]{DS} A. Dress, R. Scharlau, \emph{Indecomposable totally positive numbers in real quadratic orders}, J. Number Theory \textbf{14} (1982),  292--306
	

    \bibitem[FK]{FK} L. Fukshansky, F. Kostopoulou, \emph{On indecomposable elements in lattices},  \href{https://arxiv.org/abs/2606.00868}{arxiv:2606.00868} 
	


    

    \bibitem[Gr]{Gr} P. A. Grillet, \emph{Commutative semigroups}, Advances in Mathematics (Dordrecht) \textbf{2}, Kluwer Academic Publishers, Dordrecht, 2001
	


    \bibitem[HK]{HK} T. Hejda, V. Kala, \emph{Additive structure of totally positive quadratic integers}, Manuscripta Math. \textbf{163} (2020), 263--278



    \bibitem[Ka]{Ka} V. Kala, \emph{Universal quadratic forms and indecomposables in number fields: A survey}, Commun. Math. \textbf{31} (2023), Special issue: Euclidean lattices: theory and applications, 81--114
	
	

    \bibitem[KKR]{KKR} V. Kala, J. Kr\'asensk\'y, G. Romeo, \emph{Universality criterion sets for quadratic forms over number fields}, Adv. Math. \textbf{500} (2026), Paper No.~111080, 27 pp.
	
	

    \bibitem[KM]{KM} V. Kala, S. H. Man, \emph{Sails for universal quadratic forms}, Selecta Math. (N.S.) \textbf{31} (2025),  Paper No.~26, 31 pp.

    \bibitem[Ko]{Ko} K. Komatsu, \emph{On the adele rings of algebraic number fields}, K\=odai Math. Sem. Rep. \textbf{28} (1976), 78--84


    \bibitem[KT]{KT} V. Kala, M. Tinkov\'a, \emph{Universal quadratic forms, small norms, and traces in families of number fields}, Int. Math. Res. Not. IMRN (2023), 7541--7577

    \bibitem[KY]{KY3} V. Kala, P. Yatsyna, \emph{On Kitaoka's conjecture and lifting problem for universal quadratic forms},  Bull. Lond. Math. Soc. \textbf{55} (2023), 854--864

	
	   \bibitem[KYZ]{KYZ} V. Kala, P. Yatsyna, B. \. Zmija, \emph{Real quadratic fields with a universal form of given rank have density zero}, Amer. J. Math. (to appear), \href{https://arxiv.org/abs/2302.12080}{arxiv:2302.12080}
	
	
\bibitem[Man]{Man} S. H. Man, \emph{Minimal rank of universal lattices and number of indecomposable elements in real multiquadratic fields}, Adv. Math. \textbf{447} (2024), Paper No. 109694, 38 pp.
	

    \bibitem[MS]{MS} G. Mantilla-Soler, \emph{Density questions on arithmetic equivalence}, Proc. Amer. Math. Soc. \textbf{151} (2023),  2783--2794
	
	 \bibitem[Na]{Na} W. Narkiewicz, \emph{Elementary and analytic theory of algebraic numbers}, 3rd Edition, Springer-Verlag, Berlin, 2004




     \bibitem[Neu]{Neu} J. Neukirch, \emph{Kennzeichnung der $p$-adischen und der endlichen algebraischen Zahlk\"orper}, Invent. Math. \textbf{6} (1969), 296--314


     


    \bibitem[Pe]{Pe} O. Perron, \emph{Die Lehre von den Kettenbr\"uchen}, B. G. Teubner, Leipzig, 1913

    \bibitem[Pr]{Pr} D. Prasad, \emph{A refined notion of arithmetically equivalent number fields, and curves with isomorphic Jacobians}, Adv. Math. \textbf{312} (2017), 198--208

    \bibitem[PSZ]{PSZ} E. Pěchoučková, D. Stern, M. Zindulka, \emph{Extending integer partition identities to semigroups}, preprint


    \bibitem[RB]{RB} J. C. Rosales, M. B. Branco, \emph{Irreducible numerical semigroups}, Pacific J. Math. \textbf{209} (2003),  131--143

    


    \bibitem[RG]{RG} J. C. Rosales, P. A. Garc\'ia-S\'anchez, \emph{Numerical semigroups}, Developments in Mathematics, 20, Springer, New York, 2009

    
	


    \bibitem[Ro]{Ro} R. T. Rockafellar, \emph{Convex analysis}, Princeton Mathematical Series, No. 28, Princeton University Press, Princeton, N.J., 1970
	

	
	   \bibitem[Si]{Si3} C. L. Siegel, \emph{Sums of $m$-th powers of algebraic integers},  Ann. of Math. \textbf{46} (1945),  313--339 
	

    \bibitem[Su]{Su} A. V. Sutherland, \emph{Stronger arithmetic equivalence}, Discrete Anal. \textbf{2021}, Paper No.~23, 23 pp.

    \bibitem[Ti]{Ti} M. Tinkov\'a, \emph{On the number of indecomposable integers in totally real cubic fields}, J. Number Theory \textbf{232} (2022), 36--63

    \bibitem[TV]{TV} M. Tinkov\'a, P. Voutier, \emph{Indecomposable integers in real quadratic fields}, J. Number Theory \textbf{212} (2020), 458--482



    \bibitem[Uch]{Uch} K. Uchida, \emph{Isomorphisms of Galois groups}, J. Math. Soc. Japan \textbf{28} (1976), 617--620


    \bibitem[Wi]{Wi} H. S. Wilf, \emph{A circle-of-lights algorithm for the money-changing problem}, Amer. Math. Monthly \textbf{85} (1978),  562--565
	

    
	
	 \bibitem[Ya]{Ya} P. Yatsyna, \emph{A lower bound for the rank of a universal quadratic form with integer coefficients in a totally real field}, Comment. Math. Helvet. \textbf{94} (2019), 221--239

\end{thebibliography}
\end{document}